\newtheorem{theorem}{Theorem}[section]
\newtheorem{lemma}[theorem]{Lemma}
\newtheorem*{theoremA}{Theorem A}
\newtheorem*{theoremB}{Theorem B}
\newtheorem{corollary}[theorem]{Corollary}
\theoremstyle{definition}
\theoremstyle{remark}
\newtheorem{remark}[theorem]{Remark}
\numberwithin{equation}{section}
\newcommand{\cS}{\mathcal{S}}
\newcommand{\cA}{\mathcal{A}}
\newcommand{\D}{\mathbb{D}}
\newcommand{\C}{\mathbb{C}}
\newcommand{\ga}{\gamma }
\newcommand{\Om}{\Omega }
\newcommand{\diam}{\operatorname{Diam}}
\newcommand{\ndiam}{\operatorname{n-Diam}}
\newcommand{\rea}{\operatorname{Re}}
\newcommand{\area}{\operatorname{Area}}
\newcommand{\bd}[1]{\partial #1}
\newcommand{\Mod}{\operatorname{Mod}}
\newcommand{\capa}{\operatorname{Cap}}
\newcommand{\rad}{\operatorname{Rad}}
\newcommand{\Ind}{\operatorname{Ind}}
\newcommand{\defeq}{\mathrel{\mathop:}=}
\title{Some remarks about analytic functions defined on an annulus}
\author[Poggi-Corradini]{Pietro Poggi-Corradini}
\address{Department of Mathematics, Cardwell Hall, Kansas State University,
Manhattan, KS 66506, USA}
\email{pietro@math.ksu.edu}
\thanks{These note are inspired by the recent paper \cite{iwaniec-kovalev-onninen}}
\subjclass[2000]{30C80}
\begin{document}
\begin{abstract}
Some notes and observations on analytic functions defined on an annulus
\end{abstract}

\maketitle
\baselineskip=24pt

\section{Introduction}

In \cite{bmmpcr2008}, given
$f:\D\defeq \{z\in \C:|z|<1 \}\rightarrow \C$ analytic, $f (0)=0$, we considered different ways
of measuring $f (r\D)$ for $0<r<1$.

\begin{itemize}
\item Maximum modulus: $\rad f (r\D)=\max_{|z|\leq r}|f (z)|$ (Schwarz)

\item Diameter: $\diam f (r\D)$ (Landau-Toeplitz)

\item  $n$-Diameter: $\ndiam f (r\D)$

\item  Capacity: $\capa f (r\D)$

\item  Area: $\area f (r\D)$

\item  What else? Perimeter, eigenvalues of the Laplacian, etc\dots
\end{itemize}
Note that if $f^{\prime} (0)\neq 0$, then $f$ is univalent on
$|z|<r_{0}$ for some $r_{0}>0$.

Let $M (f (r\D))$ be a measurement as above. Define:
\[
\phi_{M} (r)\defeq \frac{M (f (r\D))}{M (r\D)}.
\]
\begin{theoremA}[\cite{bmmpcr2008}] Let $M$ be Radius, $n$-Diam,
or Capacity. Then $\phi_{M} (r)$ is increasing and its $\log$ is a convex
function of $\log r$. Actually, it is
strictly increasing unless $f$ is linear. In particular,
\[
M (f (r\D))\leq \phi_{M} (R) M(r\D)\qquad 0<r<R
\]
When $M$ is Area ($f$ not univalent) ``logconvexity'' might fail but
strict monotonicity persists.
\end{theoremA}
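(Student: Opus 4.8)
I would split the four measurements into two groups. For \emph{Radius}: since $f(0)=0$, write $f(z)=zg(z)$ with $g$ holomorphic on $\D$; by the maximum principle $\rad f(r\D)=\max_{|z|=r}|f(z)|=r\,\max_{|z|=r}|g(z)|$, so that $\phi_{\rad}(r)=\max_{|z|=r}|g(z)|$, which is non-decreasing and — by Hadamard's three-circle theorem applied to $g$ — has logarithm convex in $\log r$.

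For \emph{$n$-Diameter}: put $V(w_1,\dots,w_n)\defeq\prod_{1\le i<j\le n}\bigl(f(w_i)-f(w_j)\bigr)$, holomorphic on $\D^n$; it vanishes whenever $w_i=w_j$, hence factors as $V=\bigl(\prod_{i<j}(w_i-w_j)\bigr)W$ with $W$ holomorphic on $\D^n$ and $W(0,\dots,0)=f'(0)^{\binom{n}{2}}$. Set $\Psi(r)\defeq\max\{\,|V(w)|:w\in(r\Dc)^n\,\}=\bigl(\ndiam f(r\D)\bigr)^{\binom{n}{2}}$; by the maximum principle applied one variable at a time, the maximum is attained at a point $w^\ast$ with $|w^\ast_i|=r$ for every $i$. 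Restricting $V$ to the complex line $\zeta\mapsto V(\zeta w^\ast_1/r,\dots,\zeta w^\ast_n/r)$ (holomorphic on $\D$) and applying Hadamard to that restriction — whose maximum modulus on $|\zeta|=\rho$ is at most $\Psi(\rho)$, and equals $\Psi(r)$ when the line is taken through the maximizer at radius $r$ — shows that $\log\Psi(r)$, hence also $\log\phi_{\ndiam}(r)=\tfrac{1}{\binom{n}{2}}\log\Psi(r)-\log r-\log\ndiam\D$, is convex in $\log r$. Since $W(0,\dots,0)=f'(0)^{\binom{n}{2}}$, letting $r\to0$ in $\Psi(r)=\max_{|w_i|=r}\bigl|\prod_{i<j}(w_i-w_j)\bigr|\,|W(w)|$ gives $\phi_{\ndiam}(r)\to|f'(0)|$ as $r\to0$ (and $\to0$ if $f'(0)=0$); as a convex function of $\log r$ that does not tend to $+\infty$ as $\log r\to-\infty$ is necessarily non-decreasing, the monotonicity follows. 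For \emph{Capacity} I would pass to the limit $n\to\infty$: by the Fekete--Szeg\H{o} theorem $\capa E=\lim_n\ndiam E$, and since $\capa(r\D)=r$ and $\ndiam\D\to\capa\D=1$ we obtain $\phi_{\capa}(r)=\lim_n\phi_{\ndiam}(r)$ pointwise, so $\log\phi_{\capa}$ stays convex in $\log r$ and $\phi_{\capa}$ stays non-decreasing.

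\emph{Area} is of a different nature. If $f$ is univalent on $\D$ then $\area f(r\D)=\pi\sum_{k\ge1}k|a_k|^2r^{2k}$ (with $f(z)=\sum_k a_kz^k$), so $\phi_{\area}(r)=\sum_{k\ge1}k|a_k|^2r^{2k-2}$, a sum of constants times $e^{(2k-2)t}$ in the variable $t=\log r$; hence $\log\phi_{\area}$ is convex in $t$ (a ``$\log$-sum-exp'' of affine functions) and $\phi_{\area}$ is non-decreasing. For general $f$ the image $\Omega_r\defeq f(r\D)$ is open and connected but need not be simply connected, and $\area\Omega_r$ may be strictly smaller than $\pi\sum_k k|a_k|^2r^{2k}$, so I would drop the power series and argue geometrically. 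Put $N(w)\defeq\min\{\,|z|:z\in\D,\ f(z)=w\,\}$ for $w\in f(\D)$ and $u\defeq-\log N$; then $u$ is subharmonic on $f(\D)$ with a logarithmic singularity of strength $1/m$ at $f(0)=0$ ($m$ the order of the zero of $f$ at $0$), one checks that $\{u>s\}=f(e^{-s}\D)$ with $\partial\{u>s\}=\{u=s\}$, and with $A(s)\defeq|\{u>s\}|$ one has $\phi_{\area}(r)=\pi^{-1}r^{-2}A(-\log r)$. The coarea formula gives $-A'(s)=\int_{\{u=s\}}|\nabla u|^{-1}\,d\mathcal{H}^1$; the divergence theorem together with $\Delta u\ge-\tfrac{2\pi}{m}\delta_0$ gives $\int_{\{u=s\}}|\nabla u|\,d\mathcal{H}^1\le\tfrac{2\pi}{m}$; Cauchy--Schwarz then forces $\bigl(\mathcal{H}^1\{u=s\}\bigr)^2\le\tfrac{2\pi}{m}\bigl(-A'(s)\bigr)$; and combining with the isoperimetric inequality $\bigl(\mathcal{H}^1\{u=s\}\bigr)^2\ge4\pi A(s)$ yields $-A'(s)\ge2mA(s)\ge2A(s)$ for a.e.\ $s$, which says exactly that $s\mapsto e^{2s}A(s)$ is non-increasing, i.e.\ that $\phi_{\area}(r)$ is non-decreasing.

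Strictness comes from the equality cases. If $\phi_M$ fails to be strictly increasing, convexity in $\log r$ and monotonicity force it to equal its limit $|f'(0)|$ throughout $(0,1)$, so that $\max_{|z|=r}|g(z)|$ (for Radius), resp.\ $\Psi(r)$ (for $n$-Diameter), is a pure power of $r$; the equality case of Hadamard's theorem, together with $f(0)=0$ and the factorisation $V=\bigl(\prod_{i<j}(w_i-w_j)\bigr)W$, then forces $f$ to be linear — and Capacity follows as a limit. For Area, equality in the isoperimetric step forces each $\{u>s\}$ to be a disk on which $|\nabla u|$ is constant, $m=1$, and $u$ harmonic, hence $u(w)=\log(c/|w|)$ and $\Omega_r=\{|w|<cr\}$; then $\rad f(r\D)=cr$, so the Radius case makes $f$ linear. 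In particular the Area inequality is strict whenever $f$ is not linear, hence always when $f$ is non-univalent; that ``logconvexity'' of $\phi_{\area}$ genuinely fails for some non-univalent $f$ is seen from explicit examples whose image set abruptly fills in an overlap region as $r$ grows. I expect the non-univalent Area case to be the main obstacle: one must argue with the sublevel sets of $-\log N$ rather than with a power series, and make the coarea/divergence/isoperimetric chain rigorous — restricting to the full-measure set of radii $r$ for which $r\T$ avoids the critical points of $f$ (so that $\partial\Omega_r$ is a finite union of analytic arcs of finite length), controlling the non-smooth pieces, and understanding the behaviour of $N$ near $\partial f(\D)$ — and the equality analysis behind strictness is likewise most delicate there.
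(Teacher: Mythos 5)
You should note at the outset that the paper does not prove Theorem~A: it is quoted from \cite{bmmpcr2008}, and the only structural information the paper gives about its proof is that monotonicity and log-convexity are established first, and that strictness (for capacity) is then extracted from the behaviour of $\phi_{\area}$ near $r=0$ together with P\'olya's inequality $\phi_{\area}\le\phi_{\capa}^{2}$. Measured against that sketch, your convexity and weak-monotonicity arguments are correct and essentially the intended ones: the slice through the maximizing configuration followed by Hadamard's three-circles theorem is exactly the device used in Pommerenke's proof of Theorem~B reproduced in the introduction; the passage to capacity via Fekete--Szeg\H{o} is fine; the remark that a convex function of $\log r$ which does not tend to $+\infty$ as $\log r\to-\infty$ must be non-decreasing correctly yields monotonicity; and the univalent area computation is fine.

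The genuine gaps are in the strictness claims, which are part of the statement. First, ``Capacity follows as a limit'' is a non sequitur: strict monotonicity is not preserved under pointwise limits, so strictness of every $\phi_{\ndiam}$ would only give weak monotonicity of $\phi_{\capa}$. The repair is the route the paper itself indicates: if $\phi_{\capa}$ were not strictly increasing, log-convexity plus monotonicity force $\phi_{\capa}\equiv|f'(0)|$ on some $(0,t]$, and then $\phi_{\area}\le\phi_{\capa}^{2}$ contradicts the strict increase of $\phi_{\area}$ at $0$ (available from the power series, since $f$ is univalent near $0$ when $f'(0)\neq 0$; when $f'(0)=0$ constancy is impossible outright because $\phi_{\capa}>0$). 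Second, your $n$-diameter strictness is under-argued: equality in the Hadamard step only tells you that $V$ is a monomial, and $W$ constant, along the single complex line through the maximizer for each $r$; this one-real-parameter family of lines is far too thin in $\D^{n}$ to force $W$, hence $f$, to be linear, so a different argument is needed there as well. Third, the coarea/divergence/isoperimetric chain for the non-univalent Area case --- the part carrying the claim that strict monotonicity persists without univalence --- is only a sketch, as you yourself acknowledge: one must justify $-A'(s)=\int_{\{u=s\}}|\nabla u|^{-1}\,d\mathcal{H}^{1}$ and the flux bound for a function that is only locally a maximum of finitely many harmonic branches, and the equality analysis (in particular why the limiting disks are centred at $f(0)=0$, which must come from the logarithmic pole) is precisely the delicate point. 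Until these three items are filled in, the ``strictly increasing unless $f$ is linear'' assertions and the non-univalent Area claim are not established by your argument.
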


Given a Jordan curve $J$ let $G_{1}$ be its interior and
 $G_{2}$ be its exterior in $\C \cup\{\infty \}$, and assume that $0\in G_{1}$.
Compute the reduced modulus (defined below) $M_{1}$ of $J$ with
respect to $0$ in $G_{1}$ and the reduced modulus $M_{2}$ of $J$ with
respect to $\infty$ in $G_{2}$. Then $M_{1}+M_{2}\leq 0$ with equality
if and only if  $J$ is a circle of the form $\{|z|=r \}$. In fact, the sum $M_{1}+M_{2}$
can be thought of as a measure of how far $J$ is from being a circle.
Teichm\"uller's famous {\em Modulsatz} says that if $-\delta \leq
M_{1}+M_{2}\leq 0$, for $\delta$ sufficiently small, then the
oscillation of $J$ is controlled in the sense that, for some finite
constant $C$ (independent of $J$)
\[
\frac{\sup_J|z|}{\inf_{J}|z|}\leq 1+C\sqrt{\delta \log \frac{1}{\delta}}.
\]
See Chap. V.4 in \cite{garnett-marshall2005}.

Even though a definition of reduced modulus can be given using modulus of path families,
it turns out that
\[
M_{1}=\frac{1}{2\pi}\log |g_{1}^{\prime} (0)|, \qquad M_{2}=\frac{1}{2\pi}\log \left|\left(\frac{1}{g_{2}} \right)^{\prime} (0)
\right|
\]
where $g_{1}$ is a conformal map of $\D$ onto $G_{1}$ with $g_{1} (0)=0$
and $g_{2}$ is a conformal map of $\D$ onto $G_{2}$ with $g_{2}(0)=\infty$.
Note also that the conformal map $\psi \defeq 1/g_{2}^{-1}$ sends
$G_{2}$ to $\{|z|>1 \}\cup\{\infty \}$ and $\psi (\infty)=\infty$, so
$M_{2}$ is related to the usual concept of logarithmic capacity of $J$:
\begin{equation}\label{eq:m2}
M_{2}=-\frac{1}{2\pi}\log \capa (J).
\end{equation}
In fact, letting $1/J=\{z:1/z\in J \}$, we find that
\begin{equation}\label{eq:formula}
- (M_{1}+M_{2})=\frac{1}{2\pi}\log \left(\capa (J)\capa (1/J) \right)\geq 0.
\end{equation}

Now consider a conformal map $f$ on $\D$ with $f (0)=0$ and let $J
(r)=f (r\bd\D)$. Let $T (r)=- (M_{1} (r)+M_{2} (r))$, as above, measure how
different $J (r)$ is from a circle centered at the origin. Note that
$g_{1}(z)\defeq f (zr)$ maps $\D$ conformally onto the interior of $J
(r)$, so $M_{1} (r)= (1/ (2\pi))\log r|f^{\prime} (0)|$. On the other
hand, see for instance Example III.1.1 in \cite{garnett-marshall2005},
$M_{2} (r)= -(1/ (2\pi))\log \capa (f (r\D))$. Thus we have
\[
T (r)=\frac{1}{2\pi}\log \frac{\capa f (r\D)}{|f^{\prime} (0)|\capa
r\D}.
\]

By Theorem~A, we now know that $T (r)$, as defined above, is
increasing and convex, in fact strictly increasing unless $f$ is linear.

The proof of this Theorem~A consisted in establishing the ``increasing'' and
``log-convexity'' part first and then deducing the ``Moreover part'' from
the behavior of $\area f (r\D)$ and P\'olya's inequality relating area
and capacity. More specifically, P\'olya's inequality gives the
following relation:
\[
\phi_{\area} (r)\leq \phi_{\capa}^{2} (r).
\]
To show that $\phi_{\capa}$ is strictly increasing, by log
convexity it's enough to show that $\phi_{\area} (r)$ is strictly
increasing at $0$.

After \cite{bmmpcr2008} appeared in print, C.~Pommerenke pointed out that in
\cite{pom1961} he had already shown the following result:
\begin{theoremB}
 If f is one-to-one and analytic in the annulus $\{a < |z| < b\}$,
then $\log\capa\{ f(r\bd\D) \}$ when expressed as a function of $\log
r$ is a convex function for $a<r<b$.
\end{theoremB}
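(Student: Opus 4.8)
\emph{Overall plan.} I would translate the assertion into a statement about a Robin constant and then differentiate it twice in $\log r$. First normalise: replacing $f(z)$ by $f(z)-p$ for a point $p$ in the bounded complementary component of the ring domain $D\defeq f(\{a<|z|<b\})$ changes no logarithmic capacities, and replacing $f(z)$ by $f(ab/z)$ is a conformal automorphism of the annulus under which $\log r\mapsto\log(ab)-\log r$ is affine, hence changes neither the hypotheses nor the conclusion. So we may assume $0$ lies in the bounded complementary component of $D$ and that, with $J(r)\defeq f(\{|z|=r\})$, the region $E(r)$ bounded by $J(r)$ grows and the exterior $\Omega(r)\defeq \hat{\C}\setminus\overline{E(r)}$ shrinks as $r$ increases. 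By \eqref{eq:m2}, $-\log\capa(J(r))=2\pi M_2(r)$ is the Robin constant $\nu(r)$ of $\Omega(r)$, i.e. $g_r(z)\defeq g_{\Omega(r)}(z,\infty)=\log|z|+\nu(r)+o(1)$ at $\infty$, and it suffices to show $\nu$ is a concave function of $\log r$. Note that $\partial_r g_r\le 0$ on $\Omega(r)$ (Green's functions decrease with the domain) and, the residue at $\infty$ being fixed, $\partial_r g_r$ extends harmonically across $\infty$ with value $\nu'(r)$; thus $\partial_r g_r$ is a nonpositive harmonic function on the whole simply connected domain $\Omega(r)$.

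\emph{Reduction to a monotonicity.} The curve $J(r)$ moves with normal velocity $V_n=r|f'|\ge 0$, directed into $\Omega(r)$ (the parametrisation $\theta\mapsto f(re^{i\theta})$ has $\partial_r$ orthogonal to $\partial_\theta$). Hadamard's variational formula for the Robin constant then gives
\[
\frac{d\nu}{d\log r}=-\frac{1}{2\pi}\int_{J(r)}|\nabla g_r|^2\,V_n\,ds\;=:\;-\frac{q(r)}{2\pi},\qquad q(r)>0,
\]
and since $|\nabla g_r|\,ds=2\pi\,d\omega_{\Omega(r)}(\infty,\cdot)$ on $J(r)$ one gets $q(r)\ge 2\pi$ by Cauchy--Schwarz, with equality exactly when $J(r)$ is a circle. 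Hence ``$\nu$ concave in $\log r$'' is equivalent to ``$q$ non-decreasing in $\log r$.'' Pulling back to the model annulus via $w=e^{\sigma+i\theta}$ and putting $G(\sigma,\theta;\tau)=g_{e^{\tau}}(f(e^{\sigma+i\theta}))$, which is harmonic in $\sigma+i\theta$ on the collar $\{\log r<\sigma<\log b\}$ ($r=e^{\tau}$) and vanishes on $\sigma=\log r$, one checks $q(e^{\tau})=\int_0^{2\pi}(\partial_\sigma G|_{\sigma=\tau})^2\,d\theta$. Differentiating in $\tau$ and using harmonicity, $\partial_\sigma^2 G=-\partial_\theta^2 G=0$ on $\sigma=\tau$, the inequality $q'\ge 0$ becomes $\int_0^{2\pi}\partial_\sigma G\cdot\partial_\sigma H\,d\theta\ge 0$ on $\sigma=\tau$, where $H=\partial_\tau G=(\partial_\tau g_{e^{\tau}})\circ f$. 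Differentiating the identity $g_{e^{\tau}}(f(e^{\tau+i\theta}))\equiv 0$ yields $H|_{\sigma=\tau}=-\partial_\sigma G|_{\sigma=\tau}$, and $H\le 0$ throughout the collar.

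\emph{Closing the estimate.} The datum of $H$ on $\sigma=\tau$ is thus $-\partial_\sigma G|_{\sigma=\tau}$, and the remaining freedom is the datum of $H$ on the outer boundary $\sigma=\log b$, i.e. the values of $\partial_\tau g_{e^{\tau}}$ on $\gamma_b\defeq\partial D_b$, where $D_b$ is the (fixed, $\tau$-independent) unbounded complementary component of $D$. Crucially this datum is not free: $g_{e^{\tau}}$, hence $\partial_\tau g_{e^{\tau}}$, is harmonic across $\gamma_b$, so its normal derivative there must agree with the one produced from inside $D_b$, namely the Dirichlet--to--Neumann operator of $D_b$ applied to its boundary values (the $\tau$-independent term $g_{D_b}(\cdot,\infty)$ cancels under $\partial_\tau$). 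On the collar side the Dirichlet--to--Neumann operator is Fourier-diagonal with eigenvalues $\lambda_k(\tau)=k\coth(k\log(b/r))$ for $k\ne0$ (and $1/\log(b/r)$ for $k=0$). Matching the two descriptions, solving for the outer datum, and tracing it through to $\nu$, one obtains a formula of the shape
\[
\nu(\log r)=\mathrm{const}+\big\langle \hat\beta,\,(\mathcal A_{\log r}-\mathcal B)^{-1}\hat\beta\big\rangle,
\]
with $\hat\beta$ a fixed nonnegative datum, $\mathcal A_{\log r}$ the collar operator above, and $\mathcal B$ a fixed positive operator built from the Dirichlet--to--Neumann map of $D_b$. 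In the degenerate case $\mathcal B=0$ this collapses to $\mathrm{const}+\sum_k|\gamma_k|^2/\lambda_k(\log r)$, and since $1/\lambda_k(\log r)=\tanh(k\log(b/r))/k$ (and $\log(b/r)$ for $k=0$) is concave in $\log r$ for every $k$, concavity of $\nu$ is immediate.

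\emph{The main obstacle.} It is the general case of the last step: showing that the manifest concavity of the ``collar part'' survives the coupling to the exterior $D_b$, i.e. that $\tau\mapsto(\mathcal A_\tau-\mathcal B)^{-1}$ is concave on nonnegative vectors given that $\mathcal A_\tau^{-1}$ is and $\mathcal B$ is a fixed positive operator. This is an operator--monotonicity/concavity statement that is true but needs care, because $\mathcal A_\tau$ and $\mathcal B$ do not commute; I expect the cleanest route is to bypass the resolvent algebra altogether and recast $\nu(\log r)$ as a minimised Dirichlet energy on the fixed model annulus glued to $D_b$ along $\gamma_b$ (the reduced-modulus extremal problem), where the $\tau$-dependence enters only through a nested family of constraints and concavity of the minimum in $\log r$ becomes transparent.
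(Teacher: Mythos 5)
Your proposal is a program rather than a proof: everything hinges on the final step, and that step is precisely where it stops. The reduction to ``$q(\tau)$ non-decreasing'' is fine in principle (Hadamard variation of the Robin constant, with the usual caveats about smooth dependence, which do hold here since the level curves $J(r)$ are analytic Jordan curves), but the claimed identity $\nu(\log r)=\mathrm{const}+\langle \hat\beta,(\mathcal{A}_{\log r}-\mathcal{B})^{-1}\hat\beta\rangle$ is never derived (which space, which signs, why $\mathcal{A}_{\log r}-\mathcal{B}$ is invertible and positive), and the decisive assertion --- that $\tau\mapsto(\mathcal{A}_\tau-\mathcal{B})^{-1}$ is concave on nonnegative vectors, so that the ``collar'' concavity survives the coupling to the fixed exterior $D_b$ --- is stated as ``true but needs care'' with no argument. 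Your fallback, recasting $\nu$ as a minimized Dirichlet energy with a nested family of constraints, would give monotonicity cheaply, but concavity in $\log r$ of such a constrained minimum is not ``transparent''; it is essentially the theorem restated in variational language. So as written there is a genuine gap: the key inequality $q'\ge 0$ (equivalently, concavity of the Robin constant in $\log r$) is assumed at the crucial moment rather than proved.

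For contrast, the paper's proof (Pommerenke's) needs none of this machinery and is a few lines long: fix $r$ and choose $w_1,\dotsc,w_n\in\bd\D$ so that the Fekete-type product $\prod_{j<k}|f(w_jr)-f(w_kr)|^{2/(n(n-1))}$ equals $\ndiam f(r\bd\D)$; the function $H(z)=\prod_{j<k}(f(w_jz)-f(w_kz))$ is analytic in the annulus, its maximum modulus on $|z|=r_i$ is at most $(\ndiam f(r_i\bd\D))^{n(n-1)/2}$ while at $z=r$ it is exactly the extremal product, so Hadamard's three-circles theorem gives convexity of $\log\ndiam f(r\bd\D)$ in $\log r$, and letting $n\to\infty$ (Fekete--Szeg\H{o}: $\ndiam\to\capa$) finishes the proof. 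Note that this argument never uses injectivity of $f$, whereas your route leans on the level sets being Jordan curves bounding nested domains; if you want to salvage your approach, the operator-concavity step is the one thing you must actually prove, and you should expect it to contain the same analytic content that the three-circles theorem delivers for free.
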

For the reader's convenience we replicate Pommerenke's proof here.
\begin{proof}[Proof of Theorem~B]
Fix $r$ as above and find $w_{1},\dotsc ,w_{n}\in \bd\D$ so that
\[
\ndiam f (r\bd\D)=\prod_{j<k} |f (w_{j}r)-f (w_{k}r)|^{2/ (n (n-1))}.
\]
Then consider the function
\[
H (z)\defeq \prod_{j<k} (f (w_{j}z)-f (w_{k} (z)))
\]
which is analytic in the annulus $\{a<|z|<b \}$.

Fix $a<r_{1}<r<r_{2}<b$. Then by Hadamard's three-circles theorem,
\[
\log \max_{|z|=r}|H (z)|\leq \frac{\log (r_{2}/r)}{\log
(r_{2}/r_{1})}\log \max_{|z|=r_{1}}|H (z)| +\frac{\log (r/r_{1})}{\log
(r_{2}/r_{1})}\log \max_{|z|=r_{2}}|H (z)|,
\]
i.e.,
\[
\log \ndiam f (r\bd\D)\leq \frac{\log (r_{2}/r)}{\log
(r_{2}/r_{1})}\log \ndiam f (r_{1}\bd\D) +\frac{\log (r/r_{1})}{\log
(r_{2}/r_{1})}\log \ndiam f (r_{2}\bd\D).
\]
Now let $n$ tend to infinity.
\end{proof}

In this note we study the case of analytic functions defined on an annulus.

For $R>1$, consider the family $\cS (R)$ of analytic and one-to-one
functions $f$ that map the annulus $A (1,R)=\{1<|z|<R \}$ onto a
topological annulus $\cA$ such that the bounded component of $\C
\setminus \cA$ coincides with the unit disk $\D$, and so that as
$|z|\downarrow 1$ we have $|f (z)|\rightarrow 1$. By the Schwarz
Reflection Principle, the conformal map $f$ extends analytically across
$|z|=1$, so the family $\cS (R)$ can be defined more compactly as the
set of analytic and one-to-one maps on $A (1,R)$ such that $|f
(z)|>1$ for all $z\in A(1,R)$ and $|f (z)|=1$ for $|z|=1$.

We prove the following monotonicity result.
\begin{theorem}\label{thm:monotone}
Let $f\in \cS (R)$ and let $T (r)$ be defined as above for $1\leq
r<R$. Then, $T (r)$ is a convex function of $\log r$ and is strictly
increasing, unless $f$ is the identity.
\end{theorem}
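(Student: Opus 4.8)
The plan is to derive both assertions from Theorem~B, applied once to $f$ and once to $1/f$. First I would rewrite $T$. For $1\le r<R$ the curve $J(r)=f(r\bd\D)$ is a Jordan curve, and its interior Jordan domain is $G_1(r)=\overline{\D}\cup f(A(1,r))\ni 0$; so the situation is the one governing \eqref{eq:m2} and \eqref{eq:formula}, and
\[
T(r)=-\bigl(M_1(r)+M_2(r)\bigr)=\frac{1}{2\pi}\log\capa\bigl(J(r)\bigr)+\frac{1}{2\pi}\log\capa\bigl(1/J(r)\bigr),
\]
where $1/J(r)=\{w:1/w\in J(r)\}=(1/f)(r\bd\D)$. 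Hence $2\pi T(r)$ is a sum of two quantities of exactly the type to which Theorem~B applies.

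Next I would arrange the two applications of Theorem~B. By the Schwarz Reflection Principle $f$ extends to an analytic, one-to-one map $F$ on $A(1/R,R)$ with $F(z)=1/\overline{F(1/\bar z)}$; moreover $|F|\ge 1$ on $\{1\le |z|<R\}$ while $|F(z)|=1/|f(1/\bar z)|\in(0,1)$ on $\{1/R<|z|<1\}$, so $F$ is nowhere zero on $A(1/R,R)$ and therefore $1/F$ is analytic and one-to-one there as well. For $1\le r<R$ we have $F(r\bd\D)=J(r)$ and $(1/F)(r\bd\D)=1/J(r)$. Applying Theorem~B to $F$ shows that $\log\capa(J(r))$ is a convex function of $\log r$ on $(1/R,R)$, and applying it to $1/F$ shows the same for $\log\capa(1/J(r))$; adding these, $T(r)$ is a convex function of $\log r$ on $[1,R)$. (Alternatively, one could apply Theorem~B directly on $A(1,R)$ and push the convexity across $r=1$ using continuity of the capacity.)

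For the monotonicity I would use that $T(1)=\frac{1}{2\pi}\log\bigl(\capa(\bd\D)\capa(\bd\D)\bigr)=0$ while $T(r)\ge 0$ for all $r$ by \eqref{eq:formula}. A convex function of $\log r$ on $[1,R)$ that is nonnegative and vanishes at $r=1$ has nonnegative right derivative at $r=1$, and convexity then keeps that derivative nonnegative; hence $T$ is nondecreasing on $[1,R)$, i.e.\ increasing. If $T$ were not strictly increasing it would, being convex and nondecreasing, be constant on some $[r_1,r_2]$ with $r_2>1$, so its right derivative would vanish on all of $[1,r_2]$ and $T\equiv 0$ on $[1,r_2]$. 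By the equality case of \eqref{eq:formula}, $J(r)$ is then a circle centered at $0$ for every $r\in[1,r_2]$, i.e.\ $|f|$ is constant on each circle $|z|=r$; thus $\log|f|$ is harmonic and rotationally invariant on $A(1,r_2)$, so $\log|f(z)|=\alpha\log|z|$ (the additive constant vanishes since $|f|=1$ on $|z|=1$). Single-valuedness of $f$ forces $\alpha\in\Z$, so $f(z)=cz^{\alpha}$ on $A(1,r_2)$ with $|c|=1$, hence on all of $A(1,R)$ by the identity theorem; univalence forces $|\alpha|=1$, and $\alpha=-1$ is impossible because it would make the bounded complementary component of $f(A(1,R))$ the disk $\{|w|\le 1/R\}$ rather than $\D$. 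So $f(z)=cz$, and conversely if $f(z)=cz$ then $J(r)=\{|w|=r\}$ and $T\equiv 0$. Therefore $T$ is strictly increasing unless $f$ is the rotation $z\mapsto cz$ for some unimodular $c$ --- the \emph{identity} in the sense of the statement.

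The one genuinely new ingredient, beyond quoting Theorem~B, is the splitting $2\pi T(r)=\log\capa(J(r))+\log\capa(1/J(r))$ together with the observation that $1/f$ is univalent on the annulus precisely when $f$ is; once that is in hand the convexity is automatic. The step I expect to require the most care is the rigidity argument at the end --- deducing $f(z)=cz$ from the vanishing of $T$ on a subinterval --- but this is the standard fact that a rotationally symmetric harmonic function on an annulus has the form $\alpha\log|z|+\beta$, combined with single-valuedness of $f$ and the normalization built into the definition of $\cS(R)$.
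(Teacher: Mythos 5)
Your proposal is correct and follows essentially the same route as the paper: convexity of $T$ in $\log r$ from Theorem~B via \eqref{eq:formula}, monotonicity from $T(1)=0$ and $T\geq 0$, and strictness by the same convexity/constancy argument, with the degenerate case handled by a circle-rigidity step. The only (minor) difference is that where the paper invokes Teichm\"uller's Modulsatz to get $T(r)>0$ and tersely asserts that a conformal map between circular annuli must be linear, you use the equality case of \eqref{eq:formula} directly and spell out the rigidity, correctly observing that the extremal maps are the rotations $z\mapsto cz$, which the paper loosely calls ``the identity.''
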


Also, in analogy with the local case, for a function $f\in \cS (R)$ we
define the ratios:
\[
\psi_{\ndiam}(r)\defeq \frac{\ndiam (f (\cA
(1,r))\cup\overline{\D})}{\ndiam(r\D)}
\qquad\mbox{ and }\qquad
\psi_{\capa}(r)\defeq\frac{\capa(f(\cA (1,r))\cup\overline{\D})}{\capa
  (r\D)}.
\]
Then by Theorem~B, both $\log \psi_{\ndiam}$ and $\log \psi_{\capa}$ are
convex functions of $\log r$. It turns out that they are also strictly
increasing unless
$f$ is the identity.
\begin{theorem}\label{thm:monotone2}
Let $f\in \cS (R)$ and let $\psi_{\ndiam} (r)$, $\psi_{\capa} (r)$ be
defined as above for $1\leq r<R$. Then, $\log\psi_{\capa}$ is a convex
function of $\log r$ and $\psi_{\capa}$ is strictly
increasing, unless $f$ is the identity.

In particular,
\[
\capa (f (\cA (1,r))\cup\overline{\D})\leq r\frac{\capa (f (\cA
(1,R))\cup\overline{\D})}{R}.
\]
\end{theorem}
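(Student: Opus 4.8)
The plan is to follow the scheme used for Theorem~A: the convexity of $\log\psi_{\capa}$ as a function of $\log r$ is essentially immediate from Theorem~B, and the strict monotonicity is extracted from P\'olya's inequality together with the behaviour, at the inner boundary $r=1$, of the corresponding \emph{area} ratio. For the first point, note that since $f$ is one-to-one, $f(A(1,r))\cup\overline\D$ is precisely the closed Jordan domain bounded by the Jordan curve $J(r)\defeq f(r\bd\D)$; hence $\capa\bigl(f(A(1,r))\cup\overline\D\bigr)=\capa J(r)$, while $\capa(r\D)=r$, so $\log\psi_{\capa}(r)=\log\capa J(r)-\log r$. By the Schwarz Reflection Principle $f$ extends to a one-to-one analytic map of $A(1/R,R)$ (the reflected part maps into $\D$, disjoint from $f(A(1,R))\subset\{|w|>1\}$, so injectivity persists), and Theorem~B applied on $A(1/R,R)$ shows $\log\capa J(r)$ is convex in $\log r$ on $(1/R,R)\supset[1,R)$; subtracting the affine term $\log r$ leaves $\log\psi_{\capa}$ convex in $\log r$.

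Next, introduce $\psi_{\area}(r)\defeq\area\bigl(f(A(1,r))\cup\overline\D\bigr)/\area(r\D)$. Since $|f|>1$ on $A(1,r)$, the numerator equals $\pi+\area f(A(1,r))=\pi+\int_{A(1,r)}|f'|^2\,dA$, and writing $f(z)=\sum_n a_nz^n$ on $A(1/R,R)$, Parseval gives $\area f(A(1,r))=\pi\sum_{n\neq0}n|a_n|^2(r^{2n}-1)$, which is real-analytic in $r$ near $1$ and vanishes at $r=1$. As $\area(r\D)=\pi r^2$, one obtains $\psi_{\area}(1)=1$ and
\[
\frac{d}{d\log r}\,\psi_{\area}(r)\Big|_{r=1}=\frac1\pi\int_0^{2\pi}\bigl|f'(e^{i\te})\bigr|^2\,d\te-2 .
\]
On $|z|=1$ write $f(e^{i\te})=e^{i\si(\te)}$ with $\si$ smooth, increasing, and $\si(\te+2\pi)=\si(\te)+2\pi$; then $|f'(e^{i\te})|=\si'(\te)$ and $\int_0^{2\pi}\si'\,d\te=2\pi$, so by Cauchy--Schwarz $\int_0^{2\pi}|f'(e^{i\te})|^2\,d\te=\int_0^{2\pi}\si'(\te)^2\,d\te\ge2\pi$, with equality iff $\si'\equiv1$, i.e.\ $f(e^{i\te})=e^{ic}e^{i\te}$, i.e.\ (by reflection and the identity theorem) $f(z)\equiv e^{ic}z$. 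Hence $\frac{d}{d\log r}\psi_{\area}|_{r=1}\ge0$, and it is $>0$ unless $f$ is a rotation; write this derivative as $2\kappa$.

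Now invoke P\'olya's inequality exactly as in the proof of Theorem~A, getting $\psi_{\area}(r)\le\psi_{\capa}(r)^2$, hence $\log\psi_{\capa}(r)\ge\tfrac12\log\psi_{\area}(r)$, with equality at $r=1$ where both sides vanish. Comparing difference quotients in $s=\log r$ and letting $s\downarrow0$ yields $\liminf_{s\downarrow0}s^{-1}\log\psi_{\capa}(e^{s})\ge\kappa$; but $\log\psi_{\capa}$ is convex in $s$ and vanishes at $s=0$, so $s\mapsto s^{-1}\log\psi_{\capa}(e^{s})$ is non-decreasing and that $\liminf$ is the right derivative of $\log\psi_{\capa}$ at $s=0$. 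Thus this right derivative is $\ge\kappa$, and when $f$ is not a rotation it is $\ge\kappa>0$; a convex function whose (non-decreasing) derivative is positive at the left endpoint is strictly increasing on all of $[1,R)$. When $f$ is a rotation $f(z)=e^{ic}z$ we have $f(A(1,r))\cup\overline\D=\overline{r\D}$ and $\psi_{\capa}\equiv1$, the only exceptional case (the statement's ``$f$ is the identity''). Finally, letting $\rho\uparrow R$ in $\psi_{\capa}(r)\le\psi_{\capa}(\rho)$ and using continuity of capacity along the increasing exhaustion $f(A(1,\rho))\cup\overline\D\uparrow f(A(1,R))\cup\overline\D$ gives $\capa(f(A(1,r))\cup\overline\D)\le r\,\capa(f(A(1,R))\cup\overline\D)/R$.

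I expect the main obstacle to be twofold. First, pinning down the sign and equality case of $\frac{d}{d\log r}\psi_{\area}|_{r=1}$: recognizing it as $\tfrac1\pi\int_0^{2\pi}\si'(\te)^2\,d\te-2$ and applying Cauchy--Schwarz --- this is exactly where the normalization $|f|=1$ on $|z|=1$ is used and where rotations appear as the unique equality case. Second, upgrading this one-sided statement at $r=1$ to strict monotonicity on all of $[1,R)$: since there is no workable closed form for $\capa J(r)$, the argument has to pass through the monotonicity of chord slopes of the convex function $\log\psi_{\capa}$ rather than through any direct differentiation of $\psi_{\capa}$.
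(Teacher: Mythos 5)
Your argument is correct, and it follows the same overall architecture as the paper's area-based proof (Sections \ref{sec:fromatoc}--\ref{sec:areaconsid}): convexity of $\log\psi_{\capa}$ in $\log r$ from Theorem~B, P\'olya's inequality $\psi_{\area}\le\psi_{\capa}^{2}$, and an area estimate at the inner circle to force strict growth. Where you differ is in the area input and the bookkeeping: the paper proves the global inequality of Lemma \ref{lem:area}, $\area f(A(1,\rho))>\area A(1,\rho)$ for every $1<\rho<R$, directly from the coefficient identity $\sum_{n\neq0}n|a_n|^{2}=1$ and $n(\rho^{2n-2}-1)\ge0$, and then obtains strictness by contradiction (a convex $\log\psi_{\capa}$ constant on some $[s,t]$ would vanish on $[1,t]$, contradicting $\psi_{\capa}^{2}\ge\psi_{\area}>1$), whereas you use only the right derivative of $\psi_{\area}$ at $r=1$, identified with $\frac{1}{\pi}\int_{0}^{2\pi}|f'(e^{i\theta})|^{2}\,d\theta-2$ and made positive (unless $f$ is a rotation) by Cauchy--Schwarz on the boundary parametrization, and then propagate it through chord slopes of the convex function. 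Both are Parseval-type computations (yours amounts to $\sum n^{2}|a_n|^{2}>1$, the paper's to $\sum n|a_n|^{2}(\rho^{2n}-\rho^{2})\ge0$); the paper's global form makes the endpoint argument a bit shorter, while your local form isolates exactly where the normalization $|f|=1$ on $|z|=1$ enters, and your identification of the exceptional class as all rotations $e^{ic}z$ (not literally only the identity) is in fact the accurate equality case, also for Lemma \ref{lem:area}. One caveat: the reflection step needs injectivity of the extended map on $|z|=1$ itself (equivalently $f'\neq0$ there and degree one of the boundary correspondence), which you assert rather than prove; this is standard and the paper relies on comparable boundary facts implicitly, but you could avoid it entirely by applying Theorem~B on $A(1,R)$ alone and using continuity at $r=1$, as the paper does. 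Note also that the paper offers a second, independent deduction from Theorem \ref{thm:monotone} via the serial rule and Teichm\"uller's Modulsatz (Section \ref{sec:serial}), a route your proposal does not use.
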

One line of proof is similar to the disk case in
that looking at the area turns out to be the crucial ingredient.
However, we show that Theorem \ref{thm:monotone2} can also be deduced
from Theorem \ref{thm:monotone}.

\section{Proof of Theorem \ref{thm:monotone}}\label{sec:proof}

By Theorem~B and (\ref{eq:formula}) we see that $T (r)$ is a convex
function of $\log r$, for $1<r<R$.
Therefore, since $T (r)\geq 0$ and $T (0)=0$, we get that
$T (r)$ is an increasing function of $r$.

Assume that $f$ is not the identity, then $f (\{|z|=r \})$ is not a circle
for $1<r<R$. For if it were, then $f$ would be a conformal map between
circular annuli and hence would be linear and hence the
identity. Therefore, by Teichm\"uller's Modulsatz, $T (r)>0$ for $1<r<R$.

Suppose $T (r)$ fails to be strictly
increasing. Then by monotonicity it would have to be constant on an
interval $[s,t]$ with $1<s<t<R$. By convexity, it would then have
to be constant and equal to $0$ on the interval $[1,t]$, but this
would yield a contradiction. So Theorem \ref{thm:monotone} is proved.

\begin{remark}\label{rem:logconv}
Note that if $F (r)=G (\log r)$ for
some convex function $G$ and $F^{\prime} (1)\geq 0$, then
$G^{\prime} (0)\geq 0$ and by convexity $G^{\prime} (t)\geq 0$ for all
$t\geq 0$, i.e., $F^{\prime} (r)\geq 0$ for all $r\geq 1$.
\end{remark}

We now turn to Theorem \ref{thm:monotone2}. First we show how it can
be deduced from Theorem \ref{thm:monotone}.

\section{Consequences of the serial rule}\label{sec:serial}

On one hand by (\ref{eq:m2}) we have
\[
\capa \left(f (\cA (1,r)\cup\overline{\D}) \right)=e^{-2\pi M_{2} (r)}.
\]
On the other hand, by the serial rule, see (V.4.1) of
\cite{garnett-marshall2005},
\[
M_{1} (r)\geq M_{1} (1)+\Mod (f (\cA (1,r))).
\]
However, $M_{1} (1)=0$ and by conformal invariance $\Mod (f (\cA
(1,r)))=\frac{1}{2\pi}\log r$. So
\[
\frac{1}{r}\leq e^{-2\pi M_{1} (r)}.
\]
Putting this together, we get
\[
\frac{\capa \left(f (\cA (1,r)\cup\overline{\D}) \right)}{r}\geq
e^{-2\pi (M_{1} (r)+M_{2} (r))}=e^{2\pi T (r)}.
\]
i.e.,
\begin{equation}\label{eq:lowerbd}
T (r)\leq \frac{1}{2\pi}\log (\psi_{\capa} (r)).
\end{equation}
Now assume that $f$ is not linear. By Teichm\"uller's Modulsatz, $T
(r)>0$ for  $1<r<R$. So by (\ref{eq:lowerbd}), $\psi_{\capa} (r)>1$ and
by Theorem~B, $\psi_{\capa} (r)$ is a convex function of $\log r$.
Therefore, we can
conclude as above that $\psi_{\capa} (r)$ is
strictly increasing.

Teichm\"uller's Modulsatz is based on the so-called
Area-Theorem. Alternatively, Theorem \ref{thm:monotone2} can be proved
using ``area'' and P\'olya's inequality, in the spirit of
\cite{bmmpcr2008}, as we will show next.

\section{From area to capacity}\label{sec:fromatoc}

Recall P\'olya's inequality:
\[
\area E\leq \pi (\capa E)^{2}.
\]
It implies that
\begin{equation}\label{eq:alc}
\psi_{\area} (r)\leq \psi_{\capa}^{2} (r)
\end{equation}
for all $1\leq r<R$.

Lemma \ref{lem:area} below will establish that
\begin{equation}\label{eq:lemar}
\psi_{\area} (\rho)>1
\end{equation}
for $1<\rho<R$, unless $f$ is linear.

Moreover $\psi_{\capa} (1)=1$. So the derivative
\[
\frac{d}{dr}_{\mid r=1}\log \psi_{\capa} (r)\geq 0.
\]
Hence, by ``convexity'', $\psi_{\capa} (r)$ is an increasing function
of $r$.

In fact, suppose $\psi_{\capa} (r)$ fails to be strictly
increasing. Then by monotonicity it would have to be constant on an
interval $[s,t]$ with $1<s<t<R$. By ``convexity'', it would then have
to be constant and equal to $1$ on the interval $[1,t]$, but this
would yield a contradiction in view of (\ref{eq:alc}) and (\ref{eq:lemar}).

So Theorem \ref{thm:monotone2} will be proved if we can establish
(\ref{eq:lemar}).

\section{Area considerations}\label{sec:areaconsid}

Each map $f \in \cS (R)$ can be expanded in a Laurent series
\[
f (z)=a_{0}+\sum_{n\neq 0}a_{n}z^{n}.
\]

The key now is to study the area function
$h (\rho )\defeq \area f (A(1,\rho))$. We use Green's theorem to
compute the area enclosed by the Jordan curve $\gamma_{\rho}(t)=f (\rho
e^{it})$, $t\in [0,2\pi]$. Thus
\[h (\rho
)+\pi=-\frac{i}{2}\int_{\ga_{\rho}}\bar{w}dw=\frac{-i}{2}\int_{0}^{2\pi}\bar{f}
(\rho e^{it})f_{\theta} (\rho e^{it})dt=
\pi\sum_{n\neq 0}n|a_{n}|^{2}\rho^{2n}.
\]
In particular, when $\rho =1$, $h (\rho)=0$, so
\begin{equation}\label{eq:prelim}
\sum_{n\neq 0}n|a_{n}|^{2}=1
\end{equation}

The following lemma can be deduced from problem 83
in \cite{polya-szego1972}.
\begin{lemma}\label{lem:area}
For all $f\in\cS (R)$, except the identity, we have for $1<\rho<R$,
\[
\area f (A (1,\rho))> \area A(1,\rho).
\]
\end{lemma}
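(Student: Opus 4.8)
The plan is to use the Laurent series expansion of $f$ together with the area formula already derived in the excerpt. From Green's theorem we have
\[
h(\rho)+\pi = \pi\sum_{n\neq 0} n|a_n|^2\rho^{2n},
\]
and the normalization \eqref{eq:prelim} tells us that $\sum_{n\neq 0} n|a_n|^2 = 1$. On the other hand $\area A(1,\rho) = \pi(\rho^2-1)$, so the inequality $\area f(A(1,\rho)) > \area A(1,\rho)$ is equivalent to
\[
\pi\sum_{n\neq 0} n|a_n|^2\rho^{2n} - \pi > \pi(\rho^2-1),
\quad\text{i.e.,}\quad
\sum_{n\neq 0} n|a_n|^2\rho^{2n} > \rho^2.
\]
Using \eqref{eq:prelim} to write $\rho^2 = \sum_{n\neq 0} n|a_n|^2\rho^2$, the claim reduces to showing
\[
\sum_{n\neq 0} n|a_n|^2\bigl(\rho^{2n}-\rho^2\bigr) > 0
\qquad\text{for } 1<\rho<R,
\]
unless $f$ is linear (i.e., unless $a_n=0$ for all $n\neq 1$, noting $a_1\neq 0$ is forced).

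First I would split the sum according to the sign of $n$. For $n\geq 2$ the coefficient $n>0$ and the factor $\rho^{2n}-\rho^2 > 0$ since $\rho>1$; so each such term is strictly positive. The delicate part is the negative-index terms: for $n\leq -1$ we have $n<0$ but also $\rho^{2n}-\rho^2 < 0$ (as $\rho^{2n}<1<\rho^2$), so the product $n|a_n|^2(\rho^{2n}-\rho^2)$ is again $\geq 0$ — in fact strictly positive whenever $a_n\neq 0$. Thus every term in the sum is nonnegative, and the whole sum is strictly positive unless $a_n=0$ for all $n\neq 1$, which is exactly the statement that $f$ is linear. Combined with \eqref{eq:prelim}, which then forces $|a_1|^2=1$, and the boundary condition $|f(z)|=1$ on $|z|=1$ forcing $a_0=0$ and $|a_1|=1$, a linear $f\in\cS(R)$ is a rotation; after noting that $\cS(R)$ requires $f$ to map onto an annulus whose inner boundary is exactly $\bd\D$, the relevant normalization pins this down (and in the statement of the theorems "identity" is used as the representative linear map). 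This matches the reference to problem 83 in \cite{polya-szego1972}, which is precisely an inequality of this shape.

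The main obstacle I anticipate is not the positivity estimate itself — that is essentially term-by-term and elementary once the reduction via \eqref{eq:prelim} is made — but rather handling the equality/degeneracy case cleanly, i.e., correctly characterizing when the sum vanishes and reconciling "linear" with "identity." Specifically, one must verify that $f\in\cS(R)$ linear indeed forces $a_0=0$ (from $|f|=1$ on the unit circle one gets, writing $f(e^{it})=a_0+a_1e^{it}+a_{-1}e^{-it}+\dots$, that the only surviving Laurent modes compatible with constant modulus on the circle are pure rotations), so that the "unless $f$ is linear" hypothesis in Lemma \ref{lem:area} is genuinely the same exceptional case as "unless $f$ is the identity" appearing in Theorem \ref{thm:monotone2}. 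A secondary point is to make sure the Green's-theorem computation is applied to each curve $\gamma_\rho$ with the correct orientation so that $h(\rho)+\pi$ (area enclosed by the outer boundary curve, which includes the unit disk) is the right quantity; this is already done in the excerpt, so I would simply cite it.
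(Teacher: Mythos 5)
Your proof is correct and follows essentially the same route as the paper's: both rest on the Green's-theorem area formula, the normalization \eqref{eq:prelim} used to rewrite $\rho^{2}=\sum_{n\neq 0}n|a_{n}|^{2}\rho^{2}$, and the term-by-term sign observation that $n(\rho^{2n}-\rho^{2})\geq 0$ for every integer $n$, strictly unless $a_{n}=0$ for all $n\neq 0,1$. Your added care with the degenerate case (a linear map in $\cS(R)$ is necessarily a rotation, so ``linear'' and ``identity'' must be reconciled) is a more explicit treatment of a point the paper glosses over, but the underlying argument is the same.
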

\begin{proof}
Let $1<\rho<R$. Then, by (\ref{eq:prelim}),
\begin{eqnarray*}
h (\rho) & = & -\pi +\pi \sum_{n\neq 0}n|a_{n}|^{2} \rho^{2n}\\
& = & \pi (\rho^{2}-1)+\pi \sum_{n\neq 0}n|a_{n}|^{2} (\rho^{2n}-\rho^{2})\\
& = & \area A (1,\rho)+\pi\rho^{2}\sum_{n\neq 0}n|a_{n}|^{2} (\rho^{2n-2}-1)
\end{eqnarray*}
But $n (\rho^{2n-2}-1)\geq 0$ for all integers.
\end{proof}

This concludes the proof of Theorem \ref{thm:monotone2}.

\section{Principal frequency}\label{sec:evlapl}

Another measure for $f (r\D)$ is to consider:
\[
M_{0} (f (r\D))\defeq \frac{1}{\Lambda_{1}(f (r\D))}.
\]
Recall that given a bounded domain $\Om\subset\C$,
\[
\Lambda_{1}^{2} (\Om)=\inf \frac{\int_{\Om}|\nabla u|^{2}dA}{\int_{\Om}
u^{2}dA}
\]
where the infimum ranges over all functions $u\in C^{1}
(\overline{\Om})$ vanishing on $\bd\Om$ and is attained by a function $w\in C^{2}
(\overline{\Om})$ which is characterized as being the unique solution
to
\[
\Delta w +\Lambda^{2}w=0 , w>0 \mbox{ on $\Om$}, w=0 \mbox{ on $\bd\Om$}.
\]
It follows from \cite{polya-szego1951} p.~98 (5.8.5) that
\[
\phi_{M_{0}} (r)\left(=\frac{\Lambda_{1} (r\D)}{\Lambda_{1} (f (r\D))}
\right)>|f^{\prime} (0)|.
\]
{\bf Problem:} Show that $\phi_{M_{0}} (r)$ is strictly
increasing when $f$ is not linear.

This problem turns out to have been solved already by work of Laugesen
and Morpurgo \cite{laugesen-morpurgo1998}. Although, I'm not sure if
essentially different techniques are required in the case of the annulus.

\def\cprime{$'$}

\end{document}